\newtheorem{theorem}{\color{black}\indent Theorem}[section]
\newtheorem{lemma}{\color{black}\indent Lemma}[section]
\newtheorem{definition}{\color{black}\indent Definition}[section]
\newtheorem{remark}{\color{black}\indent Remark}[section]
\begin{document}
\title{\LARGE\bf Initial boundary value problem for a strongly damped wave equation with a general nonlinearity}
\author{Hui Yang\qquad Yuzhu Han $^{\ddag}$}
 \date{}
 \maketitle

\footnotetext{\hspace{-1.9mm}$^\ddag$Corresponding author.\\
Email addresses: yzhan@jlu.edu.cn(Yuzhu Han).

\thanks{
$^*$Supported by NSFC (11401252) and
by The Education Department of Jilin Province (JJKH20190018KJ).}}
\begin{center}
{\noindent\it\small School of Mathematics, Jilin University,
 Changchun 130012, P.R. China}
\end{center}

\date{}
\maketitle

{\bf Abstract}\ In this paper, a strongly damped semilinear wave equation with a general nonlinearity is considered.
With the help of a newly constructed auxiliary functional and the concavity argument,
a general finite time blow-up criterion is established for this problem.
Furthermore, the lifespan of the weak solution is estimated from both above and below.
This partially extends some results obtained in recent literatures and
sheds some light on the similar effect of power type nonlinearity and logarithmic nonlinearity on
finite time blow-up of solutions to such problems.

{\bf Keywords} Strongly Damped; Wave equation; General nonlinearity; Blow-up; Lifespan.

{\bf AMS Mathematics Subject Classification 2010:} 35L20, 35L71.

\section{Introduction}
\setcounter{equation}{0}

In this paper, we consider the following initial-boundary value problem for
a damped semilinear wave equation with a general nonlinearity

\begin{equation}\label{eq}
\begin{cases}
u_{tt}-\Delta u-\Delta u_t+ u_t=f(u), & x\in\Omega,t>0,\\
u(x,t)=0, & x\in\partial\Omega,t> 0,\\
u(x,0)=u_0(x), \quad u_t(x,0)=u_1(x), & x\in \Omega,
\end{cases}
\end{equation}
where $\Omega\subset \mathbb{R}^n(n\geq1)$ is a bounded domain with smooth boundary $\partial \Omega$,
$u_0\in H_0^1(\Omega)$, $u_1\in L^2(\Omega)$. The nonlinearity $f\in C^1(\mathbb{R})$ is supposed to satisfy the following assumptions:

(H1) There exists a constant $p>2$ such that
\begin{eqnarray*}
sf(s)\geq p F(s),\ \forall\ s\in \mathbb{R},
\end{eqnarray*}
where $F(s)=\int_0^s f(t) {\rm d}t$;

(H2) There exist positive constants $\alpha, \beta$ and $q\in(1,2^\ast-1)$ such that
\begin{eqnarray*}
|f(s)|\leq \alpha+\beta|s|^{q},\ \forall\ s\in \mathbb{R},
\end{eqnarray*}
where $2^\ast$ is the Sobolev conjugate of $2$, i.e., $2^\ast=+\infty$ for $n=1,2$ and $2^\ast=\frac{2n}{n-2}$ for $n\geq3$;

(H3) There exist positive constants $k_0,k_1$ such that
\begin{eqnarray*}
|f'(s)|\leq k_0+k_1|s|^{l_1},\ \forall\ s\in \mathbb{R},
\end{eqnarray*}
where $ l_1\in (0, 2^\ast-2)$.

Due to the wide applications to physics and to other applied sciences,
considerable attentions have been paid to the study of the qualitative
properties of solutions to semilinear wave equations.
For a quick start, we only refer the interested readers to
\cite{Bociu,Chen,Cholewa,Esquivel-Avila,Gazzola,Georgiev,Gerbi,Guo,Hamza,
Ikehata,Levine1974,Levine2001,Liu,Pata,Song,Sun,Webb,Zhou}
and the references therein for the backgrounds
and the motivations to the study of problem \eqref{eq}.
Among them much effort have been devoted to the study of wave equations
with damping terms, which include both strong damping and linear weak damping
\cite{Cholewa, Esquivel-Avila, Gazzola, Georgiev, Gerbi, Guo, Zhou}.
For instance, Gazzola and Squassina \cite{Gazzola}
considered the following initial boundary value problem for a damped
semilinear hyperbolic equation with power type nonlinearity
\begin{equation}\label{eg1}
\begin{cases}
u_{tt}-\Delta u-\omega \Delta u_t+\mu u_t=|u|^{p-2}u, & x\in\Omega,t>0,\\
u(x,t)=0, & x\in\partial\Omega,t> 0,\\
u(x,0)=u_0(x), \quad u_t(x,0)=u_1(x), & x\in \Omega,
\end{cases}
\end{equation}
where $\omega\geq0$, $\mu>-\omega\lambda_1$ with $\lambda_1$ being the
first eigenvalue of the operator $-\Delta$ in $\Omega$ under homogeneous Dirichlet boundary condition.
Among many other interesting results, they obtained the existence and nonexistence of global solutions and their asymptotic behaviors for initial data at subcritical and critical initial energy levels, respectively.
Moreover, when $\omega=0, \mu\geq0$, they proved that the solutions to problem \eqref{eg1} blow up in finite time for supercritical initial energy, leaving the problem open whether problem \eqref{eg1} with $\omega>0$ admits finite time blow-up solutions when the initial energy is supercritical. Recently,
Yang and Xu \cite{Yang} gave a positive answer to this open problem,
by constructing a new auxiliary functional.

There are also some works dealing with damped semilinear wave equations with logarithmic nonlinearities.
For example, Di et al. \cite{Di} considered the following initial boundary value problem for a strongly damped semilinear wave equation with logarithmic nonlinearity
\begin{equation}\label{eg2}
\begin{cases}
u_{tt}-\Delta u-\Delta u_t=|u|^{p-2}u \ln|u|, & x\in\Omega,t>0,\\
u(x,t)=0, & x\in\partial\Omega,t> 0,\\
u(x,0)=u_0(x), \quad u_t(x,0)=u_1(x), & x\in \Omega.
\end{cases}
\end{equation}
By using the classical potential well method, they gave the threshold
results for the solutions to exist globally or to blow up in finite time,
when the initial energy is subcritical and critical, respectively.
Meanwhile, they derived the bounds for the blow-up time from both above and below.
However, the authors leave two interesting problems open.

(I) As we know, the classical potential well method is applicable only to the case that
the initial energy is subcritical, when it guarantees the invariance of the unstable set $\mathcal{N}_{-}$
(see its definition in Section 2).
Therefore, the sufficient conditions for finite time blow-up for problem \eqref{eg2} given in \cite{Di}
require that the initial energy is subcritical. Naturally, the first problem
is that what will happen to the solutions to problem \eqref{eg2} when the initial energy is supercritical.

(II) The blow-up results obtained in \cite{Di} hold for all $p\in(2,2^*)$,
but the lower bound for the blow-up time was derived only when $p$ is subcritical,
i.e., $2<p<\frac{2n-2}{n-2}$ for $n\geq 3$.
The main reason for this restriction comes from the Sobolev embedding $H_0^1(\Omega)\hookrightarrow L^{2p-2}(\Omega)$, which does not hold for the supercritical exponent $p\in(\frac{2n-2}{n-2},\frac{2n}{n-2})$.
Then, the second problem is whether a lower bound for the blow-up time can be obtained for supercritical exponent $p$.
Recently, Zu and Guo \cite{Zu} obtained a new blow-up criterion for problem \eqref{eg2}
which contains the case that the initial energy is supercritical.
Moreover, when $p\in(\frac{2n-2}{n-2},\frac{2n}{n-2})$,
they also derived a lower bound for blow-up time with the help of a first order differential inequality for a newly constructed auxiliary functional.

By comparing the results of \cite{Gazzola,Yang} with that of \cite{Di,Zu}, we find that power type nonlinearity and logarithmic nonlinearity have similar effect on finite time blow-up of solutions to initial-boundary value problems for damped semilinear wave equations, which is the main purpose of this paper.
More precisely, we shall consider the blow-up property of solutions to problem \eqref{eq} with a general nonlinearity.
By borrowing some ideas from \cite{Yang} and constructing a new auxiliary functional,
we show that the unstable set $\mathcal{N}_{-}$ is invariant under the semi-flow of \eqref{eq}.
With the help of this and Levine's concavity argument, 
we give a new finite time blow-up criterion for problem \eqref{eq} with initial energy
bounded from above by $C_0\big(\|u_0\|^2+2(u_0,u_1)\big)$ (see \eqref{E(0)} in Section 3).
This in particular implies that problem \eqref{eq} admits finite time blow-up solution with 
initial energy at arbitrarily high level, at least for the power-type nonlinearity (see Remark \ref{rem2}).
An upper bound for the blow-up time is derived at the same time.
Finally, by making full use of the (strong) damping term and choosing appropriate parameters in H\"{o}lder's inequality,
we also give an estimate of the lower bound for the blow-up time.

It is easy to verify that both $f(s)=|s|^{p-2}s$ and $f(s)=|s|^{p-2}s\ln|s|$ with $p\in(2,2^*)$ satisfy the assumptions (H1)-(H3).
In this way, it is no more necessary to distinguish the cases $f(u)=|u|^{p-2}u$ and $f(u)=|u|^{p-2}u\ln |u|$ for problem \eqref{eq} in future proofs,
and the methods used here are applicable to both \eqref{eg1} and \eqref{eg2}.
Moreover, the sufficient conditions for finite time blow-up of solutions to problem \eqref{eq} are weaker than those in \cite{Yang} and \cite{Zu}.

The rest of this paper is organized as follows. In Section 2,
we present some notations, definitions and lemmas that will be used in the sequel.
A new finite time blow-up criterion is given and an upper bound for the blow-up time
is derived in Section 3. In Section 4, the blow-up time is estimated from below when blow-up occurs.

\par
\section{Preliminaries}
\setcounter{equation}{0}

In this section, we introduce some notations and lemmas that will be used throughout the paper.
In what follows, we denote by $\|\cdot\|_2$ the $L^2(\Omega)$-norm,
by $(\cdot ,\cdot )$ the $L^2(\Omega)$-inner product and
by $\langle\cdot, \cdot\rangle_{*}$ the duality pairing between $H^{-1}(\Omega)$ and $H_0^1(\Omega)$.
We equip the Sobolev space $H_0^1(\Omega)$ with the inner product $\langle u,v \rangle=\int_\Omega (uv+\nabla u\cdot \nabla v) {\rm d}x$
and the norm $\|\cdot\|^2 = \|\cdot\|_2^2+\|\nabla\cdot\|_2^2$.
By $\lambda_1>0$ we denote the first eigenvalue of $-\Delta$ in $\Omega$ under homogeneous Dirichlet boundary condition.

The solution $u(x,t)$ to problem \eqref{eq} is considered in weak sense, whose definition is given below.
Sometimes $u(x,t)$ will be simply written as $u(t)$ if no confusion arises.
\begin{definition}
(See \cite{Gazzola}) We call $u(x,t)$ a weak solution to problem \eqref{eq} over $[0,T]$,
if $u\in C([0,T];H_0^1(\Omega))\cap C^1([0,T];L^2(\Omega))\cap C^2([0,T];H^{-1}(\Omega))$
with $u_t\in L^2(0,T;H_0^1(\Omega))$ such that $u(0)=u_0$, $u_t(0)=u_1$ and
\begin{equation}\label{weak}
\langle u_{tt}(t),\phi\rangle_{*}+\int_\Omega\nabla u(t)\cdot \nabla \phi{\rm d}x + \int_\Omega\nabla u_t(t)\cdot \nabla \phi{\rm d}x    +\int_\Omega u_t(t) \phi{\rm d}x=\int_\Omega  f(u(t))\phi{\rm d}x,
\end{equation}
for all $\phi\in H_0^1(\Omega)$ and a.e. $t\in[0,T]$.
\end{definition}

Since (H2) and (H3) hold, local existence and uniqueness of weak solutions to problem \eqref{eq}
can be established by slightly modifying the argument used in proving Theorem 3.1 in \cite{Gazzola},
and thus the details are omitted.

\begin{definition}\label{def-blow-up}
Let $u(t)$ be a weak solution to problem \eqref{eq}. The maximal existence time of $u(t)$ is defined by
$$T_{max}=\sup\{T>0:u=u(t)\  \text{exists on}\  [0,T]\}.$$
We say that $u(t)$ blows up at a finite time $T_{max}<+\infty$ provided that
\begin{equation*}
\lim_{t\rightarrow T_{max}} \int_0^t\|u(\tau)\|^2{\rm d}\tau +  \|u(t)\|_2^2=+\infty.
\end{equation*}
\end{definition}

We always associate problem \eqref{eq} with the energy functional $E(t)$
and Nehari's functional $I(u)$, which are defined, respectively, by
\begin{equation}\label{e}
E(t)=E(u(t))=\dfrac{1}{2}\|u_t\|_2^2+\dfrac{1}{2}\|\nabla u\|_2^2-\int_\Omega F(u){\rm d}x,
\quad  t\in[0,T_{max}),
\end{equation}
\begin{equation}\label{i}
I(u)=\|\nabla u\|_2^2-\int_\Omega f(u)u{\rm d}x, \quad u\in H_0^1(\Omega).
\end{equation}
By taking $u_t$ as a test function in \eqref{weak}, one obtains the following energy identity
 \begin{equation}\label{energy equality}
E(t)+ \int_0^t \|u_{\tau}\|^2 {\rm d}\tau=E(0), \quad a.e.\ t\in[0,T_{max}),
 \end{equation}
which implies that $E(t)$ is continuous and nonincreasing with respect to $t$ on $[0, T_{max})$.
Moreover, since $f$ satisfies (H2)-(H3), $I(u)$ is also well defined and continuous on $H_0^1(\Omega)$.
We define the unstable set by
\begin{equation}\label{n}
\mathcal{N}_{-}=\{u\in H_0^1(\Omega)\ |\  I(u)<0\}.
\end{equation}

We end up this section with the following concavity lemma, which will be needed to
prove the main results in Section 3.

\begin{lemma} (see \cite{Han, Levine,Levine1973})\label{3.3}
Suppose that a positive, twice-differentiable function $\psi (t)$ satisfies the inequality
\begin{eqnarray*}
\psi''(t)\psi(t)-(1+\theta)(\psi'(t))^2\geq0,
\end{eqnarray*}
where $\theta>0$. If $\psi(0)>0$ and $\psi'(0)>0$, then $\psi(t)\rightarrow\infty$ as
\begin{eqnarray*}
t\rightarrow t_*\leq t^*=\frac{\psi(0)}{\theta\psi'(0)}.
\end{eqnarray*}
\end{lemma}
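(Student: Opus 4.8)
The plan is to apply the standard concavity substitution: introduce the auxiliary function $\phi(t) = \psi(t)^{-\theta}$, which is well-defined, positive, and twice-differentiable on the interval where $\psi$ exists, since $\psi > 0$ and $\theta > 0$. The strategy is to show that $\phi$ is concave and strictly decreasing near $t = 0$, so that it must reach zero at some finite time no later than $t^*$; because $\psi = \phi^{-1/\theta}$, the vanishing of $\phi$ then forces $\psi \to \infty$.

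First I would compute the first two derivatives of $\phi$. A direct calculation gives $\phi'(t) = -\theta\,\psi^{-\theta-1}\psi'$ and
\begin{equation*}
\phi''(t) = -\theta\,\psi^{-\theta-2}\left[\psi\psi'' - (1+\theta)(\psi')^2\right].
\end{equation*}
The bracketed quantity is exactly the expression in the hypothesis, which is assumed nonnegative; combined with $\theta > 0$ and $\psi^{-\theta-2} > 0$, this yields $\phi''(t) \leq 0$, so $\phi$ is concave. Since a concave function lies below its tangent line at $t = 0$, I obtain $\phi(t) \leq \phi(0) + \phi'(0)\,t$. From $\psi(0) > 0$ and $\psi'(0) > 0$ it follows that $\phi(0) = \psi(0)^{-\theta} > 0$ and $\phi'(0) = -\theta\,\psi(0)^{-\theta-1}\psi'(0) < 0$, so the tangent line is strictly decreasing and crosses zero precisely at
\begin{equation*}
t^* = -\frac{\phi(0)}{\phi'(0)} = \frac{\psi(0)}{\theta\psi'(0)}.
\end{equation*}

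Because $\phi$ stays positive wherever $\psi$ is defined yet is bounded above by a line vanishing at $t^*$, there must exist some $t_* \leq t^*$ with $\phi(t) \to 0^+$ as $t \to t_*^-$; consequently $\psi(t) = \phi(t)^{-1/\theta} \to \infty$, which is the desired conclusion. The computation is elementary, and there is no serious analytic obstacle: the only genuine content is recognizing that the differential inequality in the hypothesis is precisely the sign condition that makes $\psi^{-\theta}$ concave, after which the result follows from the trivial observation that a positive concave function cannot remain above a line that descends to zero in finite time. The main care needed is in tracking the signs of the powers of $\psi$ and the factor $\theta$ throughout the differentiation.
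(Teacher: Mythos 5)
Your proof is correct: the substitution $\phi(t)=\psi(t)^{-\theta}$, the sign computation giving $\phi''(t)=-\theta\,\psi^{-\theta-2}\left[\psi\psi''-(1+\theta)(\psi')^2\right]\leq 0$, and the tangent-line argument at $t=0$ constitute the standard Levine concavity proof, and the blow-up time bound $t^*=\psi(0)/(\theta\psi'(0))$ comes out exactly as stated. The paper itself gives no proof of this lemma (it is quoted from \cite{Han,Levine,Levine1973}), and your argument is precisely the classical one from those sources, so the two approaches coincide.
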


\section{Blow-up and upper bound for the blow-up time}
\setcounter{equation}{0}

We begin this section with two lemmas which aim to prove that the unstable set $\mathcal{N}_{-}$
is invariant under the semi-flow of problem \eqref{eq}.
With the help of them, a new finite time blow-up criterion for problem \eqref{eq} can be established.

\begin{lemma}\label{3.1}
Let $u_0\in H_0^1(\Omega)$, $u_1\in L^2(\Omega)$ and $u=u(t)$ be a weak solution
to problem \eqref{eq} such that $u(t)\in \mathcal{N}_{-}$ on $[0,T_{max})$.
Then
\begin{equation*}
\{t\mapsto \|u(t)\|^2+2(u, u_t)\}
\end{equation*}
is strictly increasing on $(0,T_{max})$.
\end{lemma}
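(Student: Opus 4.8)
The plan is to show that the function $M(t):=\|u(t)\|^2+2(u,u_t)$ has a strictly positive derivative on $(0,T_{max})$. Recalling that $\|\cdot\|^2=\|\cdot\|_2^2+\|\nabla\cdot\|_2^2$, I would first write
\begin{equation*}
M(t)=\|u\|_2^2+\|\nabla u\|_2^2+2(u,u_t)
\end{equation*}
and differentiate term by term. The regularity $u\in C^1([0,T];L^2(\Omega))$ with $u_t\in L^2(0,T;H_0^1(\Omega))$ and $u_{tt}\in C([0,T];H^{-1}(\Omega))$ allows me to compute, for a.e.\ $t$,
\begin{equation*}
M'(t)=2(u,u_t)+2(\nabla u,\nabla u_t)+2\|u_t\|_2^2+2\langle u_{tt},u\rangle_{*},
\end{equation*}
where the last two terms arise from the product rule $\frac{d}{dt}(u,u_t)=\|u_t\|_2^2+\langle u_{tt},u\rangle_{*}$ read in the duality sense.

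The crucial step is to eliminate the second-order term $\langle u_{tt},u\rangle_{*}$ by using the equation itself. Choosing $\phi=u$ as a test function in the weak formulation \eqref{weak} gives
\begin{equation*}
\langle u_{tt},u\rangle_{*}=\int_\Omega f(u)u\,{\rm d}x-\|\nabla u\|_2^2-(\nabla u_t,\nabla u)-(u_t,u).
\end{equation*}
Substituting this into the expression for $M'(t)$, the terms $2(u,u_t)$ and $2(\nabla u,\nabla u_t)$ cancel against their counterparts, and recognizing the definition \eqref{i} of Nehari's functional $I(u)=\|\nabla u\|_2^2-\int_\Omega f(u)u\,{\rm d}x$, I obtain the clean identity
\begin{equation*}
M'(t)=2\|u_t\|_2^2-2I(u).
\end{equation*}
Since $u(t)\in\mathcal{N}_{-}$ on $[0,T_{max})$, \eqref{n} gives $I(u)<0$, whence $M'(t)\geq-2I(u)>0$ for a.e.\ $t\in(0,T_{max})$; as $M$ is continuous (indeed absolutely continuous), this forces $M$ to be strictly increasing, as claimed.

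I expect the main obstacle to be technical rather than conceptual, namely rigorously justifying the term-by-term differentiation of $M(t)$ under the stated weak regularity. In particular, the identity $\frac{d}{dt}(u,u_t)=\|u_t\|_2^2+\langle u_{tt},u\rangle_{*}$ is not an elementary product rule, because $u_{tt}$ only lives in $H^{-1}(\Omega)$; it should be deduced from the standard lemma that a function $v$ with $v\in L^2(0,T;H_0^1(\Omega))$ and $v'\in L^2(0,T;H^{-1}(\Omega))$ satisfies $t\mapsto\|v\|_2^2\in AC[0,T]$ with $\frac{d}{dt}\|v\|_2^2=2\langle v',v\rangle_{*}$, applied here with $v=u$. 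Likewise $\frac{d}{dt}\|\nabla u\|_2^2=2(\nabla u,\nabla u_t)$ requires $u_t\in L^2(0,T;H_0^1(\Omega))$ and must be interpreted in the a.e.\ sense. Once these weak-sense derivative formulas are established, the cancellations and the sign argument above go through without change, and no use of the energy identity \eqref{energy equality} is needed.
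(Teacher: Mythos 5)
Your proposal is correct and follows essentially the same route as the paper: differentiate $\|u(t)\|^2+2(u,u_t)$, use the equation tested against $\phi=u$ to cancel the cross terms, and conclude $K'(t)=2(\|u_t\|_2^2-I(u))>0$ from $u(t)\in\mathcal{N}_-$. Your only deviation is cosmetic but welcome: you work with the duality pairing $\langle u_{tt},u\rangle_{*}$ and the weak formulation rather than formally substituting the strong equation, and you flag the a.e.\ justification of the product rule, which the paper glosses over.
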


\begin{proof}
We define
\begin{equation}\label{F(t)}
K(t)= \|u(t)\|^2+2(u, u_t),\qquad t\in [0,T_{max}).
\end{equation}
By taking derivative and recalling the first equation in \eqref{eq}, we have
\begin{equation}\label{K'(t)}
\begin{split}
K'(t)=&2(u, u_t)+2(\nabla u, \nabla u_t)+2\|u_t\|_2^2+2(u, u_{tt})\\
      =&2\big[ (u, u_t)+(\nabla u, \nabla u_t)+\|u_t\|_2^2+(u, \Delta u+\Delta u_t-u_t+f(u))  \big]\\
      =&2\big[\|u_t\|_2^2-\|\nabla u\|_2^2+\int_\Omega u f(u){\rm d}x\big]\\
      =&2(\|u_t\|_2^2-I(u))>0,
\end{split}
\end{equation}
which implies that $K(t)$ is strictly increasing on $(0,T_{max})$.
Here we have used the assumption that $I(u(t))<0$ for $t\in[0,T_{max})$ to deduce the last inequality.
The proof is complete.
\end{proof}

\begin{lemma}\label{3.2}  (Invariance of $\mathcal{N}_{-}$)
Let $u_0\in H_0^1(\Omega)$ and $u_1\in L^2(\Omega)$. Assume that $u_0\in\mathcal{N}_{-}$ and the initial
data satisfy
\begin{equation}\label{E(0)}
\|u_0\|^2+2(u_0, u_1)>\frac{4p(1+\lambda_1)}{(p-2)\lambda_1}E(0).
\end{equation}
Then the solution $u(t)$ to problem \eqref{eq} belongs to $\mathcal{N}_{-}$  for all $t\in[0,T_{max})$.
\end{lemma}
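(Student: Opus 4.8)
The plan is to argue by contradiction through a continuity (first--exit--time) argument. Since $f\in C^1(\mathbb{R})$ satisfies (H2)--(H3) and $u\in C([0,T_{max});H_0^1(\Omega))$, the map $t\mapsto I(u(t))$ is continuous on $[0,T_{max})$. Suppose the conclusion fails. Because $I(u_0)<0$, there is then a first time $t_0\in(0,T_{max})$ at which $u(t)$ leaves $\mathcal{N}_{-}$, characterized by $I(u(t))<0$ for all $t\in[0,t_0)$ and $I(u(t_0))=0$. The goal is to show that this is incompatible with the initial bound \eqref{E(0)}.

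First I would exploit the monotonicity from Lemma \ref{3.1}. On $[0,t_0)$ we have $u(t)\in\mathcal{N}_{-}$, so the computation \eqref{K'(t)} gives $K'(t)=2(\|u_t\|_2^2-I(u))>0$ there; hence $K(t)=\|u(t)\|^2+2(u,u_t)$ is strictly increasing on $[0,t_0)$, and by continuity $K(t_0)>K(0)=\|u_0\|^2+2(u_0,u_1)$. Combined with \eqref{E(0)}, this yields the strict lower bound
\[
K(t_0)>\frac{4p(1+\lambda_1)}{(p-2)\lambda_1}E(0).
\]

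The second ingredient is an upper bound for the same quantity, obtained by evaluating the energy at $t_0$. Using $I(u(t_0))=0$, i.e. $\|\nabla u(t_0)\|_2^2=\int_\Omega f(u(t_0))u(t_0)\,{\rm d}x$, together with (H1) in the form $\int_\Omega F(u)\,{\rm d}x\leq\frac1p\int_\Omega f(u)u\,{\rm d}x$, I would deduce from \eqref{e} and the energy inequality $E(t_0)\leq E(0)$ (which follows from \eqref{energy equality}) that
\[
E(0)\geq\frac12\|u_t(t_0)\|_2^2+\frac{p-2}{2p}\|\nabla u(t_0)\|_2^2.
\]
On the other hand, applying Young's inequality $2(u,u_t)\leq\|u\|_2^2+\|u_t\|_2^2$ and the Poincar\'e inequality $\lambda_1\|u\|_2^2\leq\|\nabla u\|_2^2$ to the definition of $K$ gives
\[
K(t_0)\leq\|u_t(t_0)\|_2^2+\frac{2+\lambda_1}{\lambda_1}\|\nabla u(t_0)\|_2^2.
\]

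Finally I would check that the constant in \eqref{E(0)} is chosen precisely so these two estimates collide: multiplying the energy bound by $\frac{4p(1+\lambda_1)}{(p-2)\lambda_1}$ produces the coefficient $\frac{2p(1+\lambda_1)}{(p-2)\lambda_1}\geq1$ in front of $\|u_t(t_0)\|_2^2$ and the coefficient $\frac{2(1+\lambda_1)}{\lambda_1}\geq\frac{2+\lambda_1}{\lambda_1}$ in front of $\|\nabla u(t_0)\|_2^2$ (both inequalities holding since $p>2$ and $\lambda_1>0$), whence $K(t_0)\leq\frac{4p(1+\lambda_1)}{(p-2)\lambda_1}E(0)$, contradicting the strict lower bound above. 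I expect the main obstacle to be not any single estimate but the bookkeeping that makes the constants line up: one must split $2(u,u_t)$ and invoke Poincar\'e in exactly the right way so that both coefficient comparisons go through simultaneously, and it is this requirement that pins down the specific constant $\frac{4p(1+\lambda_1)}{(p-2)\lambda_1}$. A secondary point requiring care is the rigorous justification of the first--exit time and of the differentiation of $K$ at the weak--solution regularity, but these follow the template already used for Lemma \ref{3.1}.
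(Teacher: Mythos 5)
Your proposal is correct and follows essentially the same route as the paper: a first-exit-time contradiction, the monotonicity of $K(t)=\|u(t)\|^2+2(u,u_t)$ from Lemma \ref{3.1}, the energy bound $E(0)\geq E(t_0)\geq\frac12\|u_t(t_0)\|_2^2+\frac{p-2}{2p}\|\nabla u(t_0)\|_2^2$ at the exit time where $I(u(t_0))=0$, and then Poincar\'e plus Cauchy--Schwarz to force $K(t_0)\leq\frac{4p(1+\lambda_1)}{(p-2)\lambda_1}E(0)$. The only difference is cosmetic bookkeeping: you bound $K(t_0)$ from above and compare coefficients term by term, whereas the paper folds the same Young/Poincar\'e estimates into a single chain bounding $E(0)$ below by $\frac{(p-2)\lambda_1}{4p(1+\lambda_1)}K(t_0)$.
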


\begin{proof}
We claim that $u(t)\in \mathcal{N}_{-}$ for all $t\in [0, T_{max})$.
Otherwise, by continuity, there would exist a $t_0 \in (0, T_{max})$ such that
\begin{equation}\label{3.2-1}
I(u(t)) < 0,\quad t\in [0, t_0),
\end{equation}
and
\begin{equation}\label{3.2-2}
I(u(t_0)) = 0.
\end{equation}
Then, it follows from Lemma \ref{3.1} that
$K(t)=\|u(t)\|^2+2(u, u_t)$ is strictly increasing on $[0, t_0)$.
Thanks to assumption \eqref{E(0)}, we have
\begin{equation}\label{3.2-3}
\|u(t)\|^2+2(u, u_t)>\|u_0\|^2+2(u_0, u_1)>\frac{4p(1+\lambda_1)}{(p-2)\lambda_1}E(0),\quad t\in (0,t_0).
\end{equation}
According to the continuity and monotonicity of $K(t)$, we arrive at
\begin{equation}\label{3.2-4}
\|u(t_0)\|^2+2(u(t_0), u_t(t_0))>\frac{4p(1+\lambda_1)}{(p-2)\lambda_1}E(0).
\end{equation}
On the other hand, from the definition of $E(t)$, $I(u)$ and the assumption (H1), we can derive
\begin{equation}\label{3.2-5}
\begin{split}
E(t)\geq & \frac{1}{2}\|u_t\|_2^2+\frac{1}{2}\|\nabla u\|_2^2-\frac{1}{p}\int_\Omega uf(u) {\rm d}x\\
      = & \frac{1}{2}\|u_t\|_2^2+\frac{p-2}{2p}\|\nabla u\|_2^2+\frac{1}{p} I(u),\quad t\in[0,T_{max}).
\end{split}
\end{equation}
In accordance with \eqref{energy equality}, \eqref{3.2-2}, \eqref{3.2-5} and Cauchy-Schwarz inequality, we know
\begin{equation}\label{3.2-6}
\begin{split}
  E(0)\geq & E(t_0)\\
     \geq& \frac{1}{2}\|u_t(t_0)\|_2^2+\frac{p-2}{2p}\|\nabla u(t_0)\|_2^2\\
     \geq& \frac{1}{2}\|u_t(t_0)\|_2^2+ \frac{(p-2)\lambda_1}{2p(1+\lambda_1)}
     \| u(t_0)\|_2^2+\frac{(p-2)\lambda_1}{2p(1+\lambda_1)}\|\nabla u(t_0)\|_2^2\\
     \geq& \frac{(p-2)\lambda_1}{2p(1+\lambda_1)}[\|u_t(t_0)\|_2^2+ \| u(t_0)\|_2^2+  \|\nabla u(t_0)\|_2^2 ] \\
     \geq& \frac{(p-2)\lambda_1}{4p(1+\lambda_1)} [ \|u_t(t_0)\|_2^2+ 2\| u(t_0)\|_2^2 + \|\nabla u(t_0)\|_2^2]\\
     \geq& \frac{(p-2)\lambda_1}{4p(1+\lambda_1)}[2(u_t(t_0), u(t_0)) + \| u(t_0)\|^2  ],
\end{split}
\end{equation}
which contradicts with \eqref{3.2-4}. The proof is complete.
\end{proof}


Next we show that the solutions to problem \eqref{eq} blow up in finite time
with initial energy that is bounded from above by $C_0\big(\|u_0\|^2+2(u_0,u_1)\big)$ for some $C_0>0$.
Moreover, an upper bound for the blow-up time is derived.

\begin{theorem}\label{blow-up}
Let all the assumptions in Lemma \ref{3.2} hold.
Then the solution $u(t)$ to problem \eqref{eq} blows up in finite time.
Moreover, $T_{max}$ can be estimated from above as follows
\begin{equation}\label{upper-bound}
 T_{max}\leq \dfrac{4\Big[\big(a^2+(\lambda-2)^2b_0\|u_0\|^2_2\big)^{1/2}+a\Big]}{(\lambda-2)^2b_0},
\end{equation}
where $\lambda,a,b_0$ are constants that will be determined in the proof.
\end{theorem}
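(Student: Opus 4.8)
My plan is to feed a carefully designed auxiliary functional into the concavity Lemma~\ref{3.3}. Guided by the blow-up quantity in Definition~\ref{def-blow-up}, I would fix $T\le T_{max}$ and free parameters $b_0,\sigma>0$ (to be pinned down at the end) and set
\[
\psi(t)=\|u(t)\|_2^2+\int_0^t\|u(\tau)\|^2\,{\rm d}\tau+(T-t)\|u_0\|^2+b_0(t+\sigma)^2,
\]
so that $\psi(t)\to\infty$ forces $\int_0^t\|u\|^2\,{\rm d}\tau+\|u\|_2^2\to\infty$. Differentiating twice and substituting \eqref{eq} exactly as in the proof of Lemma~\ref{3.1}, the strong damping $-\Delta u_t$ and the weak damping $u_t$ cancel against the cross terms from $\frac{d}{dt}\|\nabla u\|_2^2$ and $\frac{d}{dt}\|u\|_2^2$, leaving (after writing $\|u(t)\|^2-\|u_0\|^2=2\int_0^t\langle u,u_\tau\rangle\,{\rm d}\tau$)
\[
\psi'(t)=2(u,u_t)+2\int_0^t\langle u,u_\tau\rangle\,{\rm d}\tau+2b_0(t+\sigma),\qquad \psi''(t)=2\|u_t\|_2^2-2I(u)+2b_0 .
\]
Applying the Cauchy--Schwarz inequality to the three summands of $\psi'(t)/2$ and discarding the nonnegative term $(T-t)\|u_0\|^2$ yields the structural bound
\[
(\psi'(t))^2\le 4\,\psi(t)\,G(t),\qquad G(t):=\|u_t\|_2^2+\int_0^t\|u_\tau\|^2\,{\rm d}\tau+b_0 .
\]

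The core of the proof is a matching reverse inequality $\psi''(t)\ge\lambda G(t)$ for a constant $\lambda>4$, for then $\psi''\psi\ge\frac{\lambda}{4}(\psi')^2$, i.e.\ the concavity hypothesis holds with $\theta=\frac{\lambda}{4}-1>0$. Using (H1) together with \eqref{energy equality} in the form $\int_\Omega F(u)=\frac12\|u_t\|_2^2+\frac12\|\nabla u\|_2^2-E(t)$, I would derive $-2I(u)\ge p\|u_t\|_2^2+(p-2)\|\nabla u\|_2^2-2pE(t)$ and expand $-2pE(t)=-2pE(0)+2p\int_0^t\|u_\tau\|^2\,{\rm d}\tau$; the $\|u_t\|_2^2$ and $\int_0^t\|u_\tau\|^2\,{\rm d}\tau$ parts of $\lambda G$ are then covered as soon as $\lambda\le\min\{p+2,2p\}$. \emph{The one genuine obstacle is the term $-2pE(0)$}, which is harmful precisely when the initial energy is positive — the very regime the theorem is built to allow, and where the potential-well method breaks down. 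Here the hypotheses of Lemma~\ref{3.2} are decisive: since $u(t)\in\mathcal{N}_{-}$ for all $t$ and $K(t)=\|u\|^2+2(u,u_t)$ increases (Lemma~\ref{3.1}), one has $K(t)\ge K(0)$, while the \emph{strict} inequality \eqref{E(0)} reads $2pE(0)<\frac{(p-2)\lambda_1}{2(1+\lambda_1)}K(0)$. This produces a genuine positive slack $\rho_0:=\frac{(p-2)\lambda_1}{2(1+\lambda_1)}K(0)-2pE(0)>0$. Bounding $\frac{(p-2)\lambda_1}{2(1+\lambda_1)}K(t)$ above by gradient and velocity terms via Young's inequality and the Poincar\'e inequality $\|\nabla u\|_2^2\ge\frac{\lambda_1}{1+\lambda_1}\|u\|^2$, the residual $\|u_t\|_2^2$ and $\|\nabla u\|_2^2$ coefficients stay nonnegative for every $\lambda\le p+2-\frac{(p-2)\lambda_1}{2(1+\lambda_1)}$ (a bound that exceeds $4$ because $p>2$), and one is left with $\psi''-\lambda G\ge \rho_0-(\lambda-2)b_0$. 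Choosing $b_0\le \rho_0/(\lambda-2)$ then secures $\psi''\ge\lambda G$.

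With $\psi''\psi\ge(1+\theta)(\psi')^2$ established, the remaining hypotheses of Lemma~\ref{3.3} are easy: $\psi(0)\ge b_0\sigma^2>0$, and $\psi'(0)=2(u_0,u_1)+2b_0\sigma$ is made positive by enlarging $\sigma$ (with $b_0$ already fixed). Lemma~\ref{3.3} then gives $\psi(t)\to\infty$ as $t\to t_*\le t^*=\psi(0)/(\theta\psi'(0))$; since the auxiliary terms $(T-t)\|u_0\|^2$ and $b_0(t+\sigma)^2$ stay bounded on compact time intervals, the quantity in Definition~\ref{def-blow-up} must diverge, so $T_{max}\le t^*<\infty$.

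Finally, to obtain the explicit estimate \eqref{upper-bound} I would insert $\psi(0)$ and $\psi'(0)$ into $t^*=\psi(0)/(\theta\psi'(0))$ and optimize over the still-free parameter $\sigma$, which amounts to minimizing a ratio of the shape $(c+b_0\sigma^2)/(\kappa+2b_0\sigma)$. The minimizer solves a quadratic in $\sigma$, and this is exactly where the square root $\bigl(a^2+(\lambda-2)^2b_0\|u_0\|_2^2\bigr)^{1/2}$ and the factor $(\lambda-2)^2b_0$ in \eqref{upper-bound} come from; the admissible constant $b_0$ (constrained by $b_0\le\rho_0/(\lambda-2)$) and the concavity constants $\lambda$, $\theta$, along with the data-dependent quantity abbreviated $a$, are the constants fixed along the way. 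I expect the delicate balancing of the positive-energy term $-2pE(0)$ against the slack $\rho_0$ — and the accompanying smallness condition on $b_0$ — to be the main difficulty; once that estimate is secured, the rest is the standard Levine concavity machinery.
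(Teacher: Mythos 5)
Your proposal is correct and is essentially the paper's own argument: you use the same auxiliary functional \eqref{th3.1-1} (your $\psi$, $b_0$, $\sigma$ are the paper's $G$, $b$, $\eta$, and your $\lambda$ is the paper's $\lambda+2$), the same combination of Lemmas \ref{3.1} and \ref{3.2} to trade the harmful term $-2pE(0)$ for the positive slack $\frac{(p-2)\lambda_1}{2(1+\lambda_1)}\big(\|u_0\|^2+2(u_0,u_1)\big)-2pE(0)$ guaranteed by \eqref{E(0)}, the same smallness restriction on $b$, and the same concavity lemma followed by optimization in $(\eta,b)$ to reach \eqref{upper-bound}. The only step you leave implicit is the self-consistency check that $t^\ast=\psi(0)/(\theta\psi'(0))<T$ (not automatic, since $\psi(0)$ contains the term $T\|u_0\|^2$); the paper secures it through the explicit lower bound \eqref{th3.1-11} on $\eta$, which is precisely the threshold your ``enlarging $\sigma$'' must meet, after which the two arguments coincide.
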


\begin{proof}
This theorem will be proved by combining Levine's concavity argument with a specific choice of parameter.
Assume on the contrary that $u$ is a global weak solution to problem \eqref{eq}, then $T_{max}=\infty$.
For any $T >0$, $b>0$ and $\eta>0$, define

\begin{equation}\label{th3.1-1}
G(t)=\int_0^t \|u(\tau)\|^2 {\rm d}\tau +\|u(t)\|_2^2+ (T-t)\|u_0\|^2+ b(t+\eta)^2,\ \ t\in[0,T].
\end{equation}
Taking the first and second derivatives of the function $G(t)$, we have
\begin{equation}\label{th3.1-2}
\begin{split}
G'(t)=& \|u(t)\|^2+ 2(u,u_t)- \|u_0\|^2 +2b(t+\eta)  \\
=& 2(u,u_t)+2 \int_0^t \langle u,u_{\tau} \rangle {\rm d}\tau +  2b(t+\eta),\ \ t\in[0,T],
\end{split}
\end{equation}

\begin{equation}\label{th3.1-3}
\begin{split}
G''(t)=&  2(u_t, u_t)+2(u,u_{tt}) +2\langle u,u_t \rangle +2b \\
=& 2\|u_t\|_2^2 + 2(u, \Delta u+f(u)) + 2b \\
=& 2[\|u_t\|_2^2 -I(u)] + 2b,\ \ t\in[0,T].
\end{split}
\end{equation}
By \eqref{th3.1-2}, through a direct calculation, we obtain
\begin{equation}\label{th3.1-4}
\begin{split}
(G'(t))^2=& 4\Big[(u,u_t)+ \int_0^t \langle u,u_{\tau} \rangle {\rm d}\tau + b(t+\eta)\Big]^2 \\
=& 4\Big[ (u,u_t)^2+ 2(u,u_t)\int_0^t \langle u,u_{\tau} \rangle {\rm d}\tau+ 2b(u,u_t)(t+\eta) \\
&+ (\int_0^t \langle u,u_{\tau} \rangle {\rm d}\tau)^2 +2b(t+\eta)\int_0^t \langle u,u_{\tau} \rangle {\rm d}\tau+ b^2(t+\eta)^2\Big].
\end{split}
\end{equation}
According to Cauchy-Schwarz inequality, we know
\begin{equation*}
(u,u_t)\leq \|u\|_2 \|u_t\|_2,
\end{equation*}
\begin{equation*}
\int_0^t \langle u,u_{\tau} \rangle {\rm d}\tau\leq \int_0^t \|u\|\|u_{\tau}\| {\rm d}\tau
\leq\Big(\int_0^t \|u\|^2 {\rm d}\tau\Big)^{\frac{1}{2}}\Big(\int_0^t \|u_{\tau}\|^2 {\rm d}\tau\Big)^{\frac{1}{2}},
\end{equation*}
which, together with Cauchy's inequality, imply that
\begin{eqnarray}\label{th3.1-5}
&&\quad(G'(t))^2 \nonumber \\
&&\leq 4\Big[\|u\|_2^2\|u_t\|_2^2
+ 2\|u\|_2\|u_t\|_2\Big(\int_0^t \|u\|^2 {\rm d}\tau\Big)^{\frac{1}{2}}\Big(\int_0^t \|u_{\tau}\|^2 {\rm d}\tau\Big)^{\frac{1}{2}} \nonumber \\
&&\quad+ 2b(t+\eta)\|u\|_2\|u_t\|_2 +\int_0^t \|u\|^2 {\rm d}\tau\int_0^t \|u_{\tau}\|^2 {\rm d}\tau \nonumber \\
&&\quad+ 2b(t+\eta)\Big(\int_0^t \|u\|^2 {\rm d}\tau\Big)^{\frac{1}{2}}\Big(\int_0^t \|u_{\tau}\|^2 {\rm d}\tau\Big)^{\frac{1}{2}}
 +b^2(t+\eta)^2 \Big]  \\
&&\leq 4\Big[\|u\|_2^2\Big(\|u_t\|_2^2+\int_0^t \|u_{\tau}\|^2 {\rm d}\tau\Big)
+\int_0^t \|u\|^2 {\rm d}\tau\Big(\|u_t\|_2^2+\int_0^t \|u_{\tau}\|^2 {\rm d}\tau\Big)\nonumber \\
&&\quad + b\Big(\|u\|_2^2+\int_0^t \|u\|^2 {\rm d}\tau\Big)+b(t+\eta)^2\Big(\|u_t\|_2^2+\int_0^t \|u_{\tau}\|^2 {\rm d}\tau\Big)
+b^2(t+\eta)^2\Big]  \nonumber \\
&&=4\Big(\|u\|_2^2+\int_0^t \|u\|^2 {\rm d}\tau+ b(t+\eta)^2\Big)\Big(\|u_t\|_2^2+\int_0^t \|u_{\tau}\|^2 {\rm d}\tau+ b\Big).\nonumber
\end{eqnarray}
In view of \eqref{th3.1-1}, \eqref{th3.1-3} and \eqref{th3.1-5} we see that
\begin{equation}\label{th3.1-6}
\begin{split}
&G(t)G''(t)-\dfrac{\lambda+2}{4}(G'(t))^2\\
\geq& G(t)\Big[ G''(t)- (\lambda+2)(\|u_t\|_2^2+\int_0^t \|u_{\tau}\|^2 {\rm d}\tau +b) \Big]\\
=&G(t)\Big[-\lambda\|u_t\|_2^2-2I(u)-(\lambda+2)\int_0^t \|u_{\tau}\|^2 {\rm d}\tau-b\lambda \Big],
\end{split}
\end{equation}
where $2<\lambda<p$ will be decided later. Set
\begin{equation*}
H(t)=-\lambda\|u_t\|_2^2-2I(u)-(\lambda+2)\int_0^t \|u_{\tau}\|^2 {\rm d}\tau-b\lambda.
\end{equation*}
Following from \eqref{energy equality}, \eqref{i} and the assumption (H1), we have
\begin{equation}\label{th3.1-7}
\begin{split}
H(t)\geq &(p-\lambda)\|u_t\|_2^2+(p-2)\|\nabla u\|_2^2-2pE(0)+(2p-\lambda-2)\int_0^t \|u_{\tau}\|^2 {\rm d}\tau-b\lambda\\
\geq & (p-\lambda)\|u_t\|_2^2+\Big[(p-2)-\frac{p-2}{1+\lambda_1}\Big]\|\nabla u\|_2^2+\frac{p-2}{1+\lambda_1}\|\nabla u\|_2^2-2pE(0)-b\lambda\\
\geq & (p-\lambda)\|u_t\|_2^2+ \frac{(p-2)\lambda_1}{1+\lambda_1}\| u\|^2-2pE(0)-b\lambda.
\end{split}
\end{equation}
For this moment we choose
\begin{equation}\label{lambda}
\lambda=p-\frac{(p-2)\lambda_1}{1+\lambda_1},
\end{equation}
which obviously guarantees that $\lambda\in(2,p)$.
By virtue of \eqref{th3.1-7} and Lemma \ref{3.1}, we see that
\begin{equation}\label{th3.1-8}
\begin{split}
H(t) \geq&  \frac{(p-2)\lambda_1}{1+\lambda_1}\Big(\|u_t\|_2^2+\| u\|^2\Big) -2pE(0)-b\lambda\\
\geq&  \frac{(p-2)\lambda_1}{2(1+\lambda_1)}\Big(2(u,u_t)+\| u\|^2\Big) -2pE(0)-b\lambda\\
\geq&  \frac{(p-2)\lambda_1}{2(1+\lambda_1)}\Big(2(u_0,u_1)+\| u_0\|^2- \frac{4p(1+\lambda_1)}{(p-2)\lambda_1}E(0)         \Big)-b\lambda.
\end{split}
\end{equation}
Moreover, since $u(t)$ is continuous with respect to $t$, it is not difficult to see that
\begin{equation}\label{th3.1-9}
G(t)\geq \xi >0, \quad \forall\ t\in [0,T],
\end{equation}
where $\xi$ is independent of $T$.
Hence, in accordance with \eqref{E(0)}, \eqref{th3.1-6},  \eqref{th3.1-8} and \eqref{th3.1-9}, we can deduce
\begin{equation}\label{th3.1-10}
G(t)G''(t)-\dfrac{\lambda+2}{4}(G'(t))^2 \geq 0,
\end{equation}
for any $t\in[0,T]$ and $b\in \Big(0, \frac{(p-2)\lambda_1}{2\lambda(1+\lambda_1)}\big(2(u_0,u_1)+\| u_0\|^2- \frac{4p(1+\lambda_1)}{(p-2)\lambda_1}E(0)\big)\Big]$.
Choose
\begin{equation}\label{th3.1-11}
\eta>\max\Big\{0,\frac{2\|u_0\|^2-(\lambda-2)(u_0,u_1)}{b(\lambda-2)}\Big\},
\end{equation}
which is also independent of $T$, then
$$G(0)=\|u_0\|_2^2+T\|u_0\|^2+b\eta^2>0,$$
$$G'(0)=2(u_0,u_1)+2b\eta>\frac{4\|u_0\|^2}{\lambda-2}>0,$$
and
\begin{equation}\label{th3.1-12}
\frac{4G(0)}{(\lambda-2)G'(0)}=\frac{2[\|u_0\|_2^2+T\|u_0\|^2+b\eta^2]}{(\lambda-2)[(u_0,u_1)+b\eta]}<T,
\end{equation}
for enough large $T$.
According to Lemma \ref{3.3}, there exists a $t_*>0$ satisfying
\begin{equation}\label{th3.1-13}
t_*\leq \frac{4G(0)}{(\lambda-2)G'(0)}(<T)
\end{equation}
such that
$$G(t)\rightarrow\infty\ \text{as}\ t\rightarrow t_*^-.$$
This contradicts with the assumption that $G(t)$ is well defined on $[0,T]$ for any $T>0$.
At this point, we have proved the finite time blow-up result of the solution.

Next, we will estimate the upper bound for the blow-up time. Note that
we denote the maximal existence time of $u(x,t)$ by $T_{max}$, which is finite by the above argument.
Then, for any $T\in (0, T_{max})$, we define $\bar{G}(t)$ similarly to the above $G(t)$ by
$$\bar{G}(t)=\int_0^t \|u(\tau)\|^2 {\rm d}\tau +\|u(\tau)\|_2^2+ (T_{max}-t)\|u_0\|^2+ b(t+\eta)^2,\ \ t\in[0,T].$$
According to the foregoing arguments, we can still obtain
$$T\leq \frac{2[\|u_0\|_2^2+T_{max}\|u_0\|^2+b\eta^2]}{(\lambda-2)[(u_0,u_1)+b\eta]},$$
where $\lambda$ is chosen as in \eqref{lambda},
$b\in \Big(0, \frac{(p-2)\lambda_1}{2\lambda(1+\lambda_1)}\big(2(u_0,u_1)+\| u_0\|^2- \frac{4p(1+\lambda_1)}{(p-2)\lambda_1}E(0)\big)\Big]$, and $\eta$ is still required to satisfy \eqref{th3.1-11}.
By the arbitrariness of $T< T_{max}$ it follows that
\begin{equation}\label{t-1}
T_{max}\leq \frac{2[\|u_0\|_2^2+T_{max}\|u_0\|^2+b\eta^2]}{(\lambda-2)[(u_0,u_1)+b\eta]},
\end{equation}
or equivalently,
\begin{equation}\label{t-2}
 T_{max}\leq T(\eta, b)\triangleq\frac{2(\|u_0\|^2_2+b\eta^2)}{(\lambda-2)[(u_0,u_1)+b\eta]-2\|u_0\|^2}.
\end{equation}
Fix a $b \in \Big(0, \frac{(p-2)\lambda_1}{2\lambda(1+\lambda_1)}\big(2(u_0,u_1)+\| u_0\|^2- \frac{4p(1+\lambda_1)}{(p-2)\lambda_1}E(0)\big)\Big]$.
Then, minimizing $T(\eta, b)$ for $\eta>\max\Big\{0,\frac{2\|u_0\|^2-(\lambda-2)(u_0,u_1)}{b(\lambda-2)}\Big\}$ one has
$$T_{min}(\eta, b)=T(\eta_0, b)=\dfrac{4\Big[\big(a^2+(\lambda-2)^2b\|u_0\|^2_2\big)^{1/2}+a\Big]}{(\lambda-2)^2b},$$
where $a=2\|u_0\|^2-(\lambda-2)(u_0,u_1)$ and $\eta_0=\dfrac{[a^2+(\lambda-2)^2b\|u_0\|_2^2]^{1/2}+a}{(\lambda-2)b}$.\\
Minimizing $T(\eta_0, b)$ for $b \in \Big(0, \frac{(p-2)\lambda_1}{2\lambda(1+\lambda_1)}\big(2(u_0,u_1)+\| u_0\|^2- \frac{4p(1+\lambda_1)}{(p-2)\lambda_1}E(0)\big)\Big]$ we finally obtain
$$T_{min}(\eta_0, b)=T(\eta_0,b_0)
=\dfrac{4\Big[\big(a^2+(\lambda-2)^2b_0\|u_0\|^2_2\big)^{1/2}+a\Big]}{(\lambda-2)^2b_0},$$
where $b_0=\frac{(p-2)\lambda_1}{2\lambda(1+\lambda_1)}\big(2(u_0,u_1)+\| u_0\|^2- \frac{4p(1+\lambda_1)}{(p-2)\lambda_1}E(0)\big)$.  \\

In conclusion,
$$T_{max}\leq \dfrac{4\Big[\big(a^2+(\lambda-2)^2b_0\|u_0\|^2_2\big)^{1/2}+a\Big]}{(\lambda-2)^2b_0}. $$
The proof of Theorem \ref{blow-up} is complete.
\end{proof}

\begin{remark}\label{rem1}
It is worth pointing out that for the solution $u(t)$ to problem \eqref{eq} to blow up in finite time,
the assumption \eqref{E(0)} is not required when the initial energy $E(0)<0$,
which can be verified by slightly modifying the proof of Theorem \ref{blow-up}.
Indeed, if we still define the auxiliary functional $G(t)$ as in \eqref{th3.1-1},
only with the exception that we choose $b\in(0, -2E(0)]$ and
$\eta>max\{0, \frac{2\|u_0\|^2-(p-2)(u_0, u_1)}{b(p-2)}\}$,
then similarly to the proof of Theorem \ref{blow-up} we deduce that
\begin{equation*}
\begin{split}
&G(t)G''(t)-\dfrac{p+2}{4}(G'(t))^2\\
\geq&G(t)\Big[ G''(t)-(p+2)(\|u_t\|_2^2+\int_0^t \|u_{\tau}\|^2 {\rm d}\tau +b) \Big]\\
\geq&G(t)\Big[(p-2)\|\nabla u\|_2^2+(p-2)\int_0^t \|u_{\tau}\|^2 {\rm d}\tau-2pE(0)-pb\Big]\\
\geq&0.
\end{split}
\end{equation*}
From this inequality and Lemma \ref{3.3},
it follows that the solution $u(t)$ to problem \eqref{eq} blows up in finite time.
\end{remark}

\begin{remark}\label{rem2}
For many cases of nonlinearities $f(u)$, Theorem \ref{blow-up}
implies that the solution to problem \eqref{eq} blows up in
finite time at arbitrarily high initial energy level.
For example, when $f(s)=|s|^{p-2}s$ with $p\in(2,2^*)$,
the corresponding initial energy functional $E(0)$ and the initial Nehari's functional $I(u_0)$
are defined, respectively, by
$$E(0)=\frac{1}{2}\|u_1\|_2^2+\frac{1}{2}\|\nabla u_0\|_2^2-\frac{1}{p}\|u_0\|_p^p,$$
$$I(u_0)=\|\nabla u_0\|_2^2-\|u_0\|_p^p.$$
For any $\overline{u}_0\in H_0^1(\Omega)$ and $\overline{u}_1\in L^2(\Omega)$
satisfying $(\overline{u}_0, \overline{u}_1)>0$, set $u_0=\alpha \overline{u}_0, u_1=\beta \overline{u}_1$,
where $\alpha, \beta$ will be given later.
It follows from $p>2$ that there exists an $\alpha_1>0$ such that
$$I(u_0)=I(\alpha\overline{u}_0)<0,\qquad \forall\ \alpha\geq\alpha_1.$$
Moreover, for any $H>0$, there exists an $\alpha_2>0$ such that
$$\|u_0\|^2=\|\alpha \overline{u}_0\|^2>\frac{4p(1+\lambda_1)}{(p-2)\lambda_1} H,\qquad\forall\ \alpha\geq\alpha_2.$$
Take $\alpha=\max\{\alpha_1, \alpha_2\}$.
For such an $\alpha>0$, there exists an appropriate $\beta>0$ such that
$$E(0)=H.$$
Recalling $(\overline{u}_0, \overline{u}_1)>0$, it is clear that
\begin{equation*}
\begin{split}
\|u_0\|^2+2(u_0, u_1) &> \frac{4p(1+\lambda_1)}{(p-2)\lambda_1} H\\
&=\frac{4p(1+\lambda_1)}{(p-2)\lambda_1} E(0).
\end{split}
\end{equation*}
According to Theorem \ref{blow-up} it is seen that the solution $u(x, t)$ to problem \eqref{eq} blows up in finite time with initial energy $E(0)=H$.

\end{remark}

\section{Lower bound for the blow-up time}
\setcounter{equation}{0}

Since the explicit blow-up time can seldom be obtained when blow-up occurs,
it is of great importance to estimate it from both above and below.
In this section, we aim to give an estimation of the blow-up time of solutions to problem
\eqref{eq} from below.
Throughout this section we shall use $C_1,C_2,\cdots$ to denote some positive constants
which may depend on $\Omega,q,n$, but are independent of the solution $u(x,t)$.

\begin{theorem}\label{lower-bound}
Let that $q$ satisfy (H2) and assume that $u(x,t)$ is a weak
solution to problem \eqref{eq} that blows up at $T_{max}$.
Then $$T_{max}\geq \int_{M(0)}^\infty\dfrac{{\rm d}s}{C_4+C_5s^{q}},$$
where $M(0)=\|u_1\|_2^2+\|\nabla u_0\|_2^2$ .
\end{theorem}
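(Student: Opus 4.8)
The plan is to monitor the functional
\begin{equation*}
M(t)=\|u_t\|_2^2+\|\nabla u\|_2^2,
\end{equation*}
whose value at $t=0$ is exactly $M(0)=\|u_1\|_2^2+\|\nabla u_0\|_2^2$, and to show that it obeys a first–order differential inequality of the form $M'(t)\le C_4+C_5 M(t)^{q}$. First I would differentiate $M$ and, arguing as in the derivation of the energy identity \eqref{energy equality} (i.e.\ using \eqref{weak} with the test function $\phi=u_t$), replace the duality term $\langle u_{tt},u_t\rangle_{*}$. A convenient cancellation occurs: the cross term $+2(\nabla u,\nabla u_t)$ produced by $\tfrac{d}{dt}\|\nabla u\|_2^2$ cancels the $-2(\nabla u,\nabla u_t)$ coming from the $-\Delta u$ term, leaving
\begin{equation*}
M'(t)=2\int_\Omega f(u)\,u_t\,{\rm d}x-2\|\nabla u_t\|_2^2-2\|u_t\|_2^2 .
\end{equation*}
The two damping contributions $-2\|\nabla u_t\|_2^2$ (strong damping) and $-2\|u_t\|_2^2$ (weak damping) are precisely what I will exploit to absorb the nonlinear term.

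Next I would estimate $\int_\Omega f(u)u_t\,{\rm d}x$ by means of (H2). Splitting $|f(u)|\le\alpha+\beta|u|^{q}$, the constant part is bounded by $\alpha|\Omega|^{1/2}\|u_t\|_2$ through Cauchy--Schwarz, which Young's inequality absorbs into $-2\|u_t\|_2^2$ at the cost of a constant. For the genuinely nonlinear part $\beta\int_\Omega|u|^{q}|u_t|\,{\rm d}x$ the decisive step is the choice of Hölder exponents. For $n\ge 3$ I would apply Hölder with the conjugate pair $\big(q(2^\ast)',\,2^\ast\big)$, where $(2^\ast)'=2^\ast/(2^\ast-1)$, to obtain
\begin{equation*}
\int_\Omega|u|^{q}|u_t|\,{\rm d}x\le\|u\|_{q(2^\ast)'}^{q}\,\|u_t\|_{2^\ast}.
\end{equation*}
The point is that the hypothesis $q<2^\ast-1$ is exactly equivalent to $q(2^\ast)'\le 2^\ast$, so on the bounded domain $\Omega$ the chain of embeddings $H_0^1(\Omega)\hookrightarrow L^{2^\ast}(\Omega)\hookrightarrow L^{q(2^\ast)'}(\Omega)$ gives $\|u\|_{q(2^\ast)'}\le C\|\nabla u\|_2$, while the Sobolev embedding applied to $u_t$ gives $\|u_t\|_{2^\ast}\le C\|\nabla u_t\|_2$. (For $n\le 2$ any sufficiently large finite pair of exponents works, since $H_0^1\hookrightarrow L^r$ for every finite $r$.) Thus this term is controlled by $C\|\nabla u\|_2^{q}\|\nabla u_t\|_2$, and a further Young inequality absorbs the factor $\|\nabla u_t\|_2$ into $-2\|\nabla u_t\|_2^2$, leaving a term $C_5\|\nabla u\|_2^{2q}\le C_5M(t)^{q}$. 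Collecting everything yields $M'(t)\le C_4+C_5 M(t)^{q}$ with explicit $C_4,C_5>0$ built from $\alpha,\beta,|\Omega|$ and the embedding constants.

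To convert this into the lower bound I would first check that $M$ is unbounded as $t\to T_{max}^-$: if $M$ stayed bounded then, by Poincaré's inequality, $\|u(t)\|^2=\|u\|_2^2+\|\nabla u\|_2^2$ would be bounded, hence $\int_0^t\|u(\tau)\|^2\,{\rm d}\tau+\|u(t)\|_2^2$ would remain finite on the finite interval $[0,T_{max})$, contradicting Definition \ref{def-blow-up}. Choosing a sequence $t_k\to T_{max}^-$ with $M(t_k)\to\infty$, dividing the inequality by $C_4+C_5M^{q}>0$, integrating over $[0,t_k]$ and substituting $s=M(t)$ gives
\begin{equation*}
\int_{M(0)}^{M(t_k)}\frac{{\rm d}s}{C_4+C_5 s^{q}}\le t_k<T_{max};
\end{equation*}
letting $k\to\infty$ produces the asserted bound, and the improper integral converges because $q>1$, so the estimate is a genuine positive number.

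The main obstacle, and the step I would treat most carefully, is the Hölder/Sobolev balancing in the second paragraph: one must route $u_t$ into $L^{2^\ast}$ (so that the strong damping term $\|\nabla u_t\|_2^2$ can absorb it) while keeping $u$ in an exponent $q(2^\ast)'$ that remains $\le 2^\ast$. This is exactly the mechanism that removes the subcriticality restriction $q\le n/(n-2)$ (equivalently $2<p<\tfrac{2n-2}{n-2}$ in \cite{Di}) and allows the estimate to run over the whole admissible range $q\in(1,2^\ast-1)$.
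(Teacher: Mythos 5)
Your proposal is correct and follows essentially the same route as the paper's own proof: the same functional $M(t)=\|u_t\|_2^2+\|\nabla u\|_2^2$, the same differentiation using the equation, the same H\"{o}lder pairing $\big((2^\ast)',2^\ast\big)$ (the paper's exponents $\tfrac{2n}{n+2}$ and $\tfrac{2n}{n-2}$), absorption of $\|\nabla u_t\|_2$ into the strong damping via Young's inequality, and integration of $M'(t)\leq C_4+C_5M^{q}(t)$ up to $T_{max}$. The only difference is cosmetic: you absorb the constant term into $-2\|u_t\|_2^2$ rather than $-2\|\nabla u_t\|_2^2$, and your Poincar\'{e}-based justification that $M$ is unbounded near $T_{max}$ supplies a detail that the paper simply asserts in \eqref{4.1-2}.
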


\begin{proof}
We only prove this theorem for $n\geq3$. The cases $n=1,2$ can be treated similarly.
As was done in \cite{Di}, define
\begin{equation}\label{4.1-1}
M(t)=\|u_t(t)\|_2^2+\|\nabla u(t)\|_2^2,\quad t\in[0,T_{max}).
\end{equation}
Then
\begin{equation}\label{4.1-2}
\lim_{t\rightarrow T_{max}}M(t)=+\infty.
\end{equation}
Taking the first derivative and recalling \eqref{eq} and the assumption $(H_2)$, we obtain
\begin{align}\label{4.1-3}
M'(t)&=2[(u_t,u_{tt})+(\nabla u,\nabla u_t)]\nonumber\\
&=2(u_t,u_{tt}-\Delta u)\nonumber\\
&=2(u_t,\Delta u_t-u_t+f(u))\nonumber\\
&\leq -2\|u_t\|^2+2\int_\Omega |u_t||f(u)|{\rm d}x\nonumber\\
&\leq -2\|u_t\|^2+ 2\alpha\int_\Omega |u_t|{\rm d}x+ 2\beta\int_\Omega |u_t||u|^q{\rm d}x.
\end{align}
Since $q\in(1, 2^{*}-1)$, it is not difficult to verify that $\frac{2nq}{n+2}<2^{*}$,
which implies that $H_0^1(\Omega)$ can be embedded into $L^{\frac{2nq}{n+2}}(\Omega)$ continuously.
Denote by $S_{r}$ the embedding constant from $H_0^1(\Omega)$ to $L^{r}(\Omega)$,
i.e.,
\begin{equation}\label{4.1-4}
\|v\|_{r}\leq S_{r}\|\nabla v\|_2,\qquad\forall\ v\in H_0^1(\Omega),
\end{equation}
where $r\in (1, \frac{2n}{n-2}]$ for $n\geq 3$.
Using H\"{o}lder's inequality, \eqref{4.1-4} and Young's inequality with $\varepsilon$,
we have
\begin{align}\label{4.1-5}
&2\alpha\int_\Omega |u_t|{\rm d}x+ 2\beta\int_\Omega |u_t||u|^q{\rm d}x\nonumber\\
\leq& 2\alpha |\Omega|^{\frac{n+2}{2n}}\|u_t\|_{\frac{2n}{n-2}}+ 2\beta\|u_t\|_{\frac{2n}{n-2}}\Big(\int_\Omega|u|^{\frac{2nq}{n+2}}{\rm d}x\Big)^{\frac{n+2}{2n}}\nonumber\\
\leq& 2\alpha |\Omega|^{\frac{n+2}{2n}}S_{\frac{2n}{n-2}}\|\nabla u_t\|_2
+ 2\beta S_{\frac{2n}{n-2}} S_{\frac{2nq}{n+2}}^q \|\nabla u_t\|_2\|\nabla u\|_2^q \\
\leq& \varepsilon\|\nabla u_t\|_2^2+C(\varepsilon)\Big[C_1+C_2\|\nabla u\|^{2q}_2\Big]\nonumber\\
\leq&\varepsilon \|u_t\|^2+C(\varepsilon)\Big[C_1+C_3M^{q}(t)\Big].\nonumber
\end{align}
Now we take $\varepsilon\leq 2$ and substitute \eqref{4.1-5} into \eqref{4.1-3} to obtain
\begin{equation}\label{4.1-6}
M'(t)\leq C_4+C_5M^{q}(t).
\end{equation}
Integrating the inequality \eqref{4.1-6} over $[0,t]$, we arrive at
\begin{equation}\label{4.1-7}
\int_0^t\dfrac{M'(\tau)}{C_4+C_5M^{q}(\tau)}{\rm d}\tau\leq t.
\end{equation}
Letting $t\rightarrow T_{max}$ and recalling \eqref{4.1-2}, we obtain
\begin{equation}\label{4.1-8}
\int_{M(0)}^\infty\dfrac{{\rm d}s}{C_4+C_5s^{q}}\leq T_{max}.
\end{equation}
Since $q>1$, the left-hand side term in \eqref{4.1-8} is finite.
The proof is complete.
\end{proof}

\begin{remark}\label{rem3}
It is directly verified that the lower bound derived in Theorem \ref{lower-bound}
is applicable to the case when $f(s)=|s|^{\gamma-2}s\ln|s|$ with $\gamma\in(2,2^*)$.
It should be noticed that by making full use of the strong damping term,
we obtain the lower bound not only for subcritical exponent $\gamma$, i.e., $2<\gamma<\frac{2n-2}{n-2}$,
but also for the supercritical case $\frac{2n-2}{n-2}<\gamma<2^*$.
This partially extends the corresponding results obtained in \cite{Di}.
\end{remark}

{\bf Acknowledgements}\\
The authors would like to express their sincere gratitude to 
Professor Wenjie Gao in Jilin University for his enthusiastic
guidance and constant encouragement and to Professor Bin Guo 
for some valuable discussions when proving Theorem \ref{blow-up}.

\end{document}